\newcommand{\blue }[1]{{\color{blue}#1}}
\def \i {{\rm i}}
\def \M {{\mathcal M}}
\def \S {{\mathcal S}}
\def \d {\,{\rm d}}
\def\re{{\Re e\,}}
\def\geq{\geqslant}
\def\le{\leqslant}
\def\ge{\geqslant}
\theoremstyle{plain}
\newtheorem{theorem}{Theorem}[section]
\newtheorem{proposition}[theorem]{Proposition}
\newtheorem{lemma}[theorem]{Lemma}
\theoremstyle{remark}
\newtheorem{remark}{Remark}
\numberwithin{equation}{section}
\numberwithin{equation}{section}
\begin{document}
	
	\title[A note on the large values of  $|\zeta^{(\ell)}(1+{\rm i}t)|$]
	{A note on the large values of  $|\zeta^{(\ell)}(1+{\rm i}t)|$}
	\author[Zikang Dong]{Zikang Dong}
	\address{%
		CNRS LAMA 8050\\
		Laboratoire d'analyse et de math\'ematiques appliqu\'ees\\
		Universit\'e Paris-Est Cr\'eteil\\
		61 avenue du G\'en\'eral de Gaulle\\
		94010 Cr\'eteil Cedex\\
		France
	}
	\email{zikangdong@gmail.com}
		\author[Bin Wei]{Bin Wei}
	\address{Center for Applied Mathematics, Tianjin University, Tianjin 300072, P.R. China}
	\email{bwei@tju.edu.cn}

	\date{\today}
	
	\subjclass[2020]{11M06, 11N37}
	\keywords{Extreme values,
		Riemann zeta function}
	
	\begin{abstract}
		We investigate the large values of the derivatives of the Riemann zeta function $\zeta(s)$ on the 1-line. We give a larger lower bound for $\max_{t\in[T,2T]}|\zeta^{(\ell)}(1+\i t)|$, which improves the previous result established by Yang \cite{YDD}.
	\end{abstract}
	
	\maketitle
	
	\section{Introduction}
    The study of the extreme values of the Riemann zeta function has a long history.
    Over the past decades, quite a few of results has been established.
    The values on the critical line $\sigma=1/2$ was first considered by Titchmarsh \cite{Ti86}, who showed that there exists arbitrarily large $t$ such that for any $\alpha<1/2$ we have $|\zeta(1/2+\i t)|\ge\exp\big((\log t)^\alpha\big)$. We refer to \cite{Mon77,Rama77,Ba86,BS17,BS19,DT18} for results in progression.
    For the critical strip $1/2<\re s<1$, it was also Titchmarsh \cite{Ti28} who first showed that for any $\varepsilon>0$ and fixed $\sigma\in(1/2,1)$, there exist arbitrarily large $t$ such that $|\zeta(\sigma+{\rm i}t)|\ge \exp\{(\log t)^{1-\sigma-\varepsilon}\}.$ We refer to \cite{Le72,Mon77,A16,BS18,DW22} on this topic.
    The study of the values on the 1-line can date back to 1925 when Littlewood \cite{Li25} showed that there exists arbitrarily large $t$ for which
    $|\zeta(1+{\rm i}t)|\geq\{1+o(1)\}{\rm e}^{\gamma}\log_2t .$
    Here and throughout, we denote by $\log_j$ the $j$-th iterated logarithm and by $\gamma$ the Euler constant. For further results, we refer to \cite{Le72,GS06,Ai19,DW22}.

    It also draws wide interests on the extreme values of the derivatives of the Riemann zeta function.
    For any fixed $\ell\in {\mathbb N}$, denote
    $$Z^{(\ell)}(T):=\max_{t\in[T,2T]}|\zeta^{(\ell)}(1+\i t)|.$$
Besides other results, Yang \cite{YDD} recently proved that if $T$ is sufficiently large, then uniformly for $\ell\le (\log T)/(\log_2 T)$, we have
	\begin{align}
		Z^{(\ell)}(T)\ge \frac{{\rm e}^{\gamma} \ell^{\ell}}{(\ell+1)^{\ell+1}}\{\log_2T-\log_3T+O(1)\}^{\ell+1}.
		\label{qe511}
	\end{align}
	In this note, we aim to improve the constant $\ell^{\ell}/(\ell+1)^{\ell+1}$ in \eqref{qe511}. We have the following theorem.

\begin{theorem}\label{th511}
For $T\to\infty$ and $\ell\le(\log T)/(\log_2 T)$, we have
		$$Z^{(\ell)}(T)\ge \frac{{\rm e}^{\gamma}}{\ell+1}(\log_2T)^{\ell+1}\{1+o(1)\}.$$
	\end{theorem}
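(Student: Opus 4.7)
My plan is to approximate $\zeta^{(\ell)}(1+{\rm i}t)$ by a short Dirichlet polynomial, to locate a resonant $t^{*}\in[T,2T]$ via a Dirichlet pigeonhole argument, and then to extract the main term using Mertens-type estimates for the truncated Euler product. The strategy parallels Yang's \cite{YDD}; the improvement in the constant comes from a sharper balance of parameters and a more direct evaluation of the main sum.

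Concretely, differentiating $\ell$ times the classical approximation $\zeta(s)=\sum_{n\le y}n^{-s}+y^{1-s}/(s-1)+O(y^{-\sigma})$ (valid for $\re s\ge 1$ and $2\le y\le T$) one obtains
\[
\zeta^{(\ell)}(1+{\rm i}t)=(-1)^{\ell}\sum_{n\le y}\frac{(\log n)^{\ell}}{n^{1+{\rm i}t}}+o\!\big((\log_{2}T)^{\ell+1}\big)
\]
uniformly for $t\in[T,2T]$, provided $y$ is a small positive power of $T$. Next, a standard pigeonhole argument (or equivalently Kronecker's theorem coupled with an equidistribution estimate) supplies $t^{*}\in[T,2T]$ such that $\|t^{*}\log p/(2\pi)\|\le\epsilon$ holds simultaneously for every prime $p\le z$, provided $\pi(z)\log(1/\epsilon)\le\log T-O(1)$. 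I would take $\log z=(1+o(1))\log_{2}T$ and $\epsilon$ smaller than any fixed negative power of $\log_{2}T$; since $\pi(z)\sim\log T/\log_{2}T$ this fits within the budget. At such a $t^{*}$, every $n\le y$ with $P(n)\le z$ satisfies $|n^{{\rm i}t^{*}}-1|\ll\epsilon\,\Omega(n)=o(1)$, so taking the real part of the Dirichlet polynomial gives
\[
\re\!\Big((-1)^{\ell}\zeta^{(\ell)}(1+{\rm i}t^{*})\Big)\ge(1+o(1))\sum_{\substack{n\le y\\ P(n)\le z}}\frac{(\log n)^{\ell}}{n}+R(t^{*}),
\]
where $R(t^{*})$, the contribution from non-smooth $n$, is controlled by a mean-square bound for short Dirichlet polynomials on $[T,2T]$.

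To evaluate the smooth sum, once $y$ is sufficiently large relative to $z$ it is asymptotic to the full smooth sum $\sum_{P(n)\le z}(\log n)^{\ell}/n=(-1)^{\ell}F^{(\ell)}(1)$, where $F(s)=\prod_{p\le z}(1-p^{-s})^{-1}$. Mertens' theorem gives $F(1)=e^{\gamma}\log z+O(1)$, the logarithmic-derivative identity $F'/F=-\sum_{p\le z}\log p/(p^{s}-1)$ together with $\sum_{p\le z}\log p/(p-1)=\log z+O(1)$ handles the first derivative, and iterating through the Fa\`a di Bruno expansion of $F^{(\ell)}=(e^{\log F})^{(\ell)}$ yields
\[
(-1)^{\ell}F^{(\ell)}(1)\ge\frac{e^{\gamma}}{\ell+1}(\log z)^{\ell+1}(1+o(1)).
\]
With $\log z=(1+o(1))\log_{2}T$ this completes the proof. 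The main obstacle is the simultaneous balancing of $(y,z,\epsilon)$: the parameter $z$ must be large enough that $\log z\sim\log_{2}T$, the truncation $y$ must be large enough to capture essentially the full smooth sum, and the precision $\epsilon$ must satisfy $\epsilon\log y=o(1)$, all subject to the Dirichlet budget $\pi(z)\log(1/\epsilon)\le\log T$. A careful choice of these parameters, together with the Fa\`a di Bruno lower bound above, yields the improvement from Yang's constant $e^{\gamma}\ell^{\ell}/(\ell+1)^{\ell+1}$ to $e^{\gamma}/(\ell+1)$ (a factor tending to $e$ as $\ell\to\infty$).
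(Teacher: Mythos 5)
The decisive gap is the localization step: the claim that ``a standard pigeonhole argument (or Kronecker's theorem with an equidistribution estimate) supplies $t^{*}\in[T,2T]$''. A Dirichlet box-principle argument for simultaneous approximation only controls a \emph{difference} of two sample times: it produces some $t^{*}$ with $\|t^{*}\log p/(2\pi)\|\le\epsilon$ for all $p\le z$ and $0<t^{*}\le T$, with no control whatsoever on where in $(0,T]$ it lies, so it cannot be forced into the dyadic window $[T,2T]$ over which $Z^{(\ell)}(T)$ is defined. Nor can you recover this by quantitative equidistribution on $[T,2T]$: under your own budget $\pi(z)\log(1/\epsilon)\le\log T-O(1)$ the expected measure of the good set is $T(2\epsilon)^{\pi(z)}=O(1)$, while the Fourier-analytic (Erd\H{o}s--Tur\'an--Koksma type) error terms involve reciprocals of nonzero linear forms $\sum_{p\le z}k_p\log p$, which (since $\pi(z)\log z\gg\log T$ for your choice of $z$) can be far smaller than $1/T$; the error swamps the main term, and the good set could a priori be empty in $[T,2T]$. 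This is exactly the barrier the paper's Remark points to: Diophantine-approximation proofs yield such (indeed stronger) bounds only on long intervals like $[\sqrt{T},T]$, and their validity on $[T,2T]$ is precisely the Granville--Soundararajan prediction, still unproved. The paper avoids the issue entirely by the resonance method: the resonator $R(t)=\sum_{m\in\M}m^{\i t}$ built from the divisor-closed set $\M$ of divisors of $\prod_{p\le x}p^{b-1}$ with $x=\log T/(3\log_2T)$, $b=\lfloor\log_2T\rfloor$ (so ${\mathcal P}\le\sqrt T$), the two moments $M_1,M_2$ weighted by Soundararajan's bump function supported in $[1,2]$ --- which confines everything to $[T,2T]$ automatically --- and then the combinatorial evaluation of $|\M|^{-1}\sum_{m\in\M}\sum_{k\mid m}(\log k)^{\ell}/k$ of Proposition \ref{prop531} via the dissection into the sets $\M_j$.

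There is a second, independent gap: even granting a resonant $t^{*}$, you propose to control the non-smooth contribution $R(t^{*})$ ``by a mean-square bound for short Dirichlet polynomials on $[T,2T]$''. Since the resonant points form a set of measure $O(T\epsilon^{\pi(z)})=O(1)$, an $L^{2}$ bound over the whole interval says nothing about the value at such exceptional points; classical treatments either arrange that no non-smooth terms occur or average over a structured discrete set, which is a genuinely different argument. By contrast, your evaluation of the smooth sum is sound: the lower bound $(-1)^{\ell}F^{(\ell)}(1)\ge \frac{{\rm e}^{\gamma}}{\ell+1}(\log z)^{\ell+1}\{1+o(1)\}$ is correct (for $\ell$ not too large) and is morally the same computation as Proposition \ref{prop531}, obtained there by slicing the smooth numbers according to the size of their largest prime factor. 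But as it stands this correct ingredient is attached to a selection mechanism that does not exist inside $[T,2T]$, so the proposal does not prove Theorem \ref{th511}; it would at best reprove a bound on a long interval such as $[2,T]$.
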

	\begin{remark}
    Recently, the authors \cite{DW22} proved that
    $$\max_{t\in[\sqrt{T},T]}|\zeta(1+\i t)|\ge{\rm e}^\gamma(\log_2T+\log_3T+c),$$
    where $c$ is a computable constant. Granville and Soundararajan \cite{GS06} predicted that it should still be true for $\max_{t\in[T,2T]}|\zeta(1+\i t)|$.
    These results seems stronger than that of Theorem \ref{th511} with $\ell =0$.
    The reason is that after taking derivatives of the Riemann zeta function, we are no longer able to make use of the multiplicativity of its Dirichlet coefficients as previously did. Nevertheless, Theorem \ref{th511} remains a generalization of Littlewood's initial bound (see \cite{Li25}).
    \end{remark}

    Both Yang's proof and ours employ the resonance method used by Bondarenko and Seip \cite{BS18}.
    For a large $x$, we take
    \begin{align}
    {\mathcal P}:=\prod_{p\le x}p^{b-1}\qquad\text{and}
    \qquad{\mathcal M}:=\{n\in{\mathbb N}:\;n\mid {\mathcal P}\}.\label{PM}
    \end{align}
    The key ingredient of the proof is a weighted reciprocal sum in the form
    $$
    \S(x; \ell):=\sum_{m\in\M}\sum_{k\mid m}\frac{(\log k)^{\ell}}{k},
    $$
    where $\ell\ge 0$ is an integer.
    In \cite{YDD}, Yang divided ${\mathcal M}$ as well as ${\mathcal P}$ into two subsets,
    according to whether $p\le x^{\ell/(\ell+1)}$.
    Our choice is to give a finer division.
    Specifically, let $J\ge 1$ be a positive integer.
    For $0\le j\le J$, denote
    \begin{align}
    \M_j:=\Big\{m\in{\mathbb N}:\;m\mid \prod_{p\le x^{j/J}}p^{b-1}\Big\}.
    \label{Mj}
    \end{align}
    Thus we divide the set $\M$ into $J$ subsets:
    $$
    \mathcal M = \bigsqcup_{j=1}^J (\M_j\setminus\M_{j-1}).
    $$
    By this trick, we are able to enlarge the estimate of $\S(x; \ell)$ with a factor $(1+1/\ell)^{\ell}$.
    We summarize it as the following proposition.

    \begin{proposition}\label{prop531}
Under the previous notation, we have
$$
\frac{1}{|\M|}\S(x; \ell)
\ge \frac{{\rm e}^{\gamma}}{\ell+1}
\bigg\{1+O\bigg(\frac{1}{J} + \frac {J\log_2 x}b + \frac{J^2}{\log x}\bigg)\bigg\}(\log x)^{\ell+1},
$$
uniformly for $x\ge 3$, $b\ge 1$, $J\ge 1$, $\ell\ge 0$
where the implied constant is absolute.
    \end{proposition}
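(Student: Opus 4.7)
The approach is to swap the order of summation in $\S(x;\ell)$ and then partition the inner sum using the decomposition $\M=\bigsqcup_{j=1}^{J}(\M_j\setminus\M_{j-1})$. Since a uniformly random $m\in\M$ has valuations $v_p(m)$ independent and uniform on $\{0,\ldots,b-1\}$ across the primes $p\le x$, we have $\#\{m\in\M:k\mid m\}=|\M|\prod_{p\le x}(1-v_p(k)/b)$, and hence
$$
\frac{1}{|\M|}\S(x;\ell)=\sum_{k\in\M}\frac{(\log k)^\ell}{k}\prod_{p\le x}\Big(1-\frac{v_p(k)}{b}\Big).
$$
For $k\in\M_j\setminus\M_{j-1}$, some prime divisor of $k$ exceeds $x_{j-1}:=x^{(j-1)/J}$, so $(\log k)^\ell\ge\bigl((j-1)\log x/J\bigr)^\ell$. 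Splitting the sum over $k$ according to this partition gives the lower bound
$$
\frac{1}{|\M|}\S(x;\ell)\ge\sum_{j=1}^{J}\Big(\frac{(j-1)\log x}{J}\Big)^{\!\ell}(T_j-T_{j-1}),\quad T_j:=\sum_{k\in\M_j}\frac{1}{k}\prod_{p\le x}\Big(1-\frac{v_p(k)}{b}\Big).
$$

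The second step is to evaluate $T_j$ as a finite Euler product. Since $v_p(k)=0$ for $p>x_j$ when $k\in\M_j$, the sum factors, and a short computation gives
$$
T_j=\prod_{p\le x_j}\frac{1}{b}\sum_{i=0}^{b-1}\frac{b-i}{p^i}=\prod_{p\le x_j}\Big(1-\frac{1}{p}\Big)^{-1}\Big\{1-\frac{1}{b(p-1)}+O(p^{-b})\Big\}.
$$
Combining Mertens' theorem $\prod_{p\le y}(1-1/p)^{-1}=e^\gamma\log y\{1+O(1/\log y)\}$ with $\sum_{p\le y}1/(p-1)=\log_2 y+O(1)$ yields, for $x_j\ge 2$,
$$
T_j=e^\gamma\log x_j\Big\{1+O\Big(\frac{1}{\log x_j}+\frac{\log_2 x}{b}\Big)\Big\}.
$$
Inserting this back, the leading term is the left Riemann sum $e^\gamma(\log x)^{\ell+1}\cdot J^{-1}\sum_{j=1}^{J}((j-1)/J)^{\ell}$, which by the monotonicity of $u\mapsto u^\ell$ on $[0,1]$ lies within $e^\gamma(\log x)^{\ell+1}/J$ of $e^\gamma(\log x)^{\ell+1}/(\ell+1)$; this produces the $O(1/J)$ term. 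Propagating the Mertens error $O(1/\log x_j)\le O(J/\log x)$ and the truncation error $O(\log_2 x/b)$ through $\sum_{j}(\log x_{j-1})^\ell(T_j-T_{j-1})$, and comparing with the main term, yields the $O(J^2/\log x)$ and $O(J\log_2 x/b)$ contributions respectively.

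The main technical obstacle is keeping the error propagation uniform in the four parameters $\ell,J,b,x$. In the degenerate regime $J>\log x/\log 2$ one has $x_1=x^{1/J}<2$, so Mertens is inapplicable to the first few $T_j$'s; one falls back on the trivial bound $T_j-T_{j-1}\ge 0$ there, with the loss absorbed by the $O(J^2/\log x)$ error so that the proposition still holds uniformly for all $J\ge 1$.
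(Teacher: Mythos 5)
Your proposal is correct and follows essentially the same route as the paper: the same partition $\M=\bigsqcup_{j}(\M_j\setminus\M_{j-1})$ with the bound $\log k\ge\frac{(j-1)}{J}\log x$, the same Euler-product evaluation of $T_j=\prod_{p\le x^{j/J}}\sum_{\nu=0}^{b-1}(1-\nu/b)p^{-\nu}$ via Mertens and $\sum_{p\le y}1/(p-1)\ll\log_2 y$, the same differencing $T_j-T_{j-1}$ with the attendant factor-$J$ error amplification, and the same Riemann-sum estimate giving $1/(\ell+1)+O(1/J)$. The only cosmetic difference is that you count $\#\{m\in\M:k\mid m\}=|\M|\prod_{p\le x}(1-v_p(k)/b)$ probabilistically, whereas the paper obtains the identical product by factoring $m=m_1m_2$; your explicit remark on the degenerate regime $x^{1/J}<2$ is a welcome extra precision.
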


\medskip

\section{Proof of Proposition \ref{prop531}}

The following asymptotic formula plays a key role in the proof of Proposition \ref{prop531}.

\begin{lemma}\label{lm523}
We have
$$
\prod_{p\le x}\sum_{\nu=0}^{b-1}\bigg(1-\frac{\nu}{b}\bigg)\frac{1}{p^\nu}
= \bigg\{1+O\bigg(\frac{\log_2x}{b}+\frac{1}{\log x}\bigg)\bigg\}{\rm e}^{\gamma}\log x
$$
uniformly for $x\ge 3$ and $b\ge 1$,
where the implied constants are absolute.
\end{lemma}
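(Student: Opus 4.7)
The plan is to put the inner sum in closed form, then apply Mertens' theorem together with a standard logarithmic estimate. Direct summation (or, equivalently, the telescoping identity applied to partial sums of the geometric series) gives
$$
S_p:=\sum_{\nu=0}^{b-1}\Big(1-\frac{\nu}{b}\Big)\frac{1}{p^\nu}=\frac{p}{p-1}\bigl(1-c_p\bigr),
\qquad
c_p:=\frac{1-p^{-b}}{b(p-1)}=\frac{1}{b}\sum_{j=1}^{b}\frac{1}{p^j}.
$$
The product therefore factors as $\prod_{p\le x}S_p=\prod_{p\le x}\frac{p}{p-1}\cdot\prod_{p\le x}(1-c_p)$, and Mertens' third theorem gives the first factor as $\{1+O(1/\log x)\}\,e^\gamma\log x$.

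It remains to show that $\prod_{p\le x}(1-c_p)=1+O(\log_2 x/b)$. The extremal case $p=2,b=1$ yields $c_p=1/2$, and in every other case $c_p\le 1/(b(p-1))\le 1/2$, so $\log(1-c_p)=-c_p+O(c_p^2)$ holds term by term. I then split $c_p$ by the index $j$:
$$
\sum_{p\le x}c_p=\frac{1}{b}\sum_{p\le x}\frac{1}{p}+\frac{1}{b}\sum_{j=2}^{b}\sum_{p\le x}\frac{1}{p^j}=\frac{\log_2 x}{b}+O(1/b),
$$
by Mertens' second theorem for the $j=1$ piece and the convergence of $\sum_{p}1/(p(p-1))$ for the $j\ge 2$ tail. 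The quadratic remainder obeys $\sum_{p}c_p^2\le(1/b^2)\sum_{p}1/(p-1)^2=O(1/b^2)$. Consequently
$$
\prod_{p\le x}(1-c_p)=\exp\!\bigl(-\log_2 x/b+O(1/b)\bigr).
$$

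I finish by a case split on $\log_2 x/b$. When $\log_2 x/b=O(1)$, Taylor expansion of the exponential around zero yields $1+O(\log_2 x/b)$. When $\log_2 x/b$ is large, the exponential is $O(1)$, hence trivially $1+O(\log_2 x/b)$. The short range $3\le x<e^e$ only contributes a bounded finite product, which is absorbed into the implied constants. Multiplying with the Mertens factor gives the stated asymptotic.

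The main obstacle is uniformity in $b$: when $b$ is small relative to $\log_2 x$ the correction $\prod(1-c_p)$ really drifts away from $1$, so a naive Taylor expansion of $e^{-y}$ around zero is not legitimate. The case split above is what allows the error to always be packaged as $O(\log_2 x/b)$, which is precisely the shape needed for the lemma.
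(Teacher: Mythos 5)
Your proof is correct and follows essentially the same route as the paper: both factor the inner sum as $(1-1/p)^{-1}$ times a correction of size $1-O\big(\tfrac{1}{b(p-1)}\big)$, bound the accumulated correction by $\sum_{p\le x}\tfrac{1}{b(p-1)}\ll \log_2x/b$, and conclude with Mertens' third theorem. The only difference is cosmetic: you evaluate the inner sum exactly in closed form and pass through logarithms with a case split on $\log_2x/b$, which makes the uniformity in $b$ explicit, whereas the paper truncates the infinite series with an $O(p^{-b})$ tail and propagates the multiplicative errors more tersely.
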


\begin{proof}
	See also \cite[Eq. (15)]{YDD} and \cite[page 129]{BS18}.
    For a fixed prime $p$, we have
\begin{align*}		
\sum_{\nu=0}^{b-1}\bigg(1-\frac{\nu}{b}\bigg)\frac{1}{p^\nu}
& = \bigg(\sum_{\nu\ge0}-\sum_{\nu\ge b}\bigg)\bigg(1-\frac{\nu}{b}\bigg)\frac{1}{p^\nu}
\\
& =\bigg(1-\frac{1}{b(p-1)}\bigg)\bigg(1-\frac1p\bigg)^{-1}+O\bigg(\frac{1}{p^b}\bigg).
\end{align*}
Therefore, we can deduce that
	\begin{align*}
		\prod_{p\le x}\sum_{\nu=0}^{b-1}\bigg(1-\frac{\nu}{b}\bigg)\frac{1}{p^\nu}
		=\bigg\{1+O\bigg(\frac1b\sum_{p\le x}\frac{1}{p-1}\bigg)\bigg\}\prod_{p\le x}\bigg(1-\frac1p\bigg)^{-1}.
	\end{align*}
Then the lemma follows by Mertens' formula
$$
\prod_{p\le x}\bigg(1-\frac1p\bigg)^{-1}
= \bigg\{1+O\bigg(\frac{1}{\log x}\bigg)\bigg\}{\rm e}^{\gamma}\log x,
$$
and the fact that $\sum_{p\le x}\frac{1}{p-1}\ll \log_2x$.
\end{proof}

    Now we are prepared to prove Proposition \ref{prop531}.
    By the construction, the set $\M$ is divisor-closed which means $k\mid m, \, m\in\M$ implies $k\in\M$.
	Then by the definition of $\M_j$ in \eqref{Mj}, we have
	\begin{align*}
	\sum_{m\in\M}\sum_{k\mid m}\frac{(\log k)^{\ell}}{k}
    = \sum_{i=1}^J\sum_{k\in\M_j\setminus\M_{j-1}}\frac{(\log k)^{\ell}}{k}\sum_{\substack{m\in\M\\ k\mid m}} 1.
	\end{align*}
	Note that $k\in\M_j\setminus\M_{j-1}$ implies $k\ge x^{(j-1)/J}$. Therefore
\begin{align}
\sum_{m\in\M}\sum_{k\mid m}\frac{(\log k)^{\ell}}{k}
\ge(\log x)^{\ell} \sum_{j=1}^J\bigg(\frac {j-1}{J}\bigg)^{\ell}
\sum_{k\in\M_j\setminus\M_{j-1}}\frac{1}{k}\sum_{\substack{m\in\M\\ k\mid m}} 1.
\label{qe536}
\end{align}

	For each $0\le i\le J$, we rewrite $m=m_1m_2$ where $m_1$ and $m_2$ lie in $\M_i$ and $\M\setminus\M_i$ accordingly. Then we have
\begin{align*}
		\sum_{k\in\M_{i}}\frac{1}{k}\sum_{\substack{m\in\M\\ k\mid m}} 1
=\sum_{m_1\in\M_{i}}\sum_{k|{m_1}}\frac{1}{k}
\sum_{m_2\in\M\setminus\M_i}1.
	\end{align*}
For the sum over $m_1$, we have
$$
\sum_{m_1\in\M_{i}}\sum_{k\mid m_1}\frac{1}{k}
=\prod_{p\in\M_i}\sum_{\substack{m_1|p^{b-1}\\ k\mid m_1}} \frac{1}{k}
=\prod_{p\le x^{i/J}}\sum_{\nu=0}^{b-1} \frac{b-\nu}{p^\nu}.
$$
For the sum over $m_2$, clearly we have
$$
\sum_{m_2\in\M\setminus\M_i}1=\prod_{x^{i/J}<p\le x}b.
$$
Therefore, we deduce that
	\begin{align*}
		\sum_{k\in\M_{i}}\frac{1}{k}\sum_{\substack{m\in\M\\ k\mid m}} 1
={b^{\pi(x)}}\prod_{p\le x^{i/J}}\sum_{\nu=0}^{b-1}\frac{1}{p^\nu}\bigg(1-\frac{\nu}{b}\bigg).
	\end{align*}
	Note that ${b^{\pi(x)}}=|\M|$. By Lemma \ref{lm523}, we have
\begin{align}\label{Mi}
\sum_{k\in\M_{i}} \frac{1}{k} \sum_{\substack{m\in\M\\ k\mid m}} 1
= \frac iJ |\M| \bigg\{1+O\bigg(\frac{\log_2 x}{b} + \frac{J}{\log x}\bigg)\bigg\} {\rm e}^{\gamma}\log x.
\end{align}

In view of \eqref{Mi}, by taking difference of $\M_{j-1}$ and $\M_j$, we obtain
$$
\sum_{k\in\M_j\setminus\M_{j-1}}\frac{1}{k}\sum_{\substack{m\in\M\\ k\mid m}} 1
= \frac {|\M|}J \bigg\{1+O\bigg(\frac{J\log_2 x}{b} + \frac{J^2}{\log x}\bigg)\bigg\}{\rm e}^{\gamma}\log x.
$$
Inserting this into \eqref{qe536}, we have
$$	
\sum_{m\in\M}\sum_{k\mid m}\frac{(\log k)^{\ell}}{k}
\ge \frac{|\M|}J\sum_{j=1}^J \bigg(\frac {j-1}{J}\bigg)^{\ell}
\bigg\{1+O\bigg(\frac{J\log_2 x}{b}+\frac{J^2}{\log x}\bigg)\bigg\}{\rm e}^{\gamma}(\log x)^{\ell+1}.
$$

Now Proposition \ref{prop531} follows supplied that
$$
\frac{1}{J} \sum_{j=1}^J \bigg(\frac {j-1}{J}\bigg)^{\ell}
= \frac{1}{\ell+1}+O\bigg(\frac{1}{J}\bigg).
$$
While this is trivial by the integral inequalities
$$
\frac{1}{J}\sum_{j=1}^J\bigg(\frac {j-1}{J}\bigg)^{\ell}
\le \int_0^1u^{\ell} \d u
\le \frac{1}{J}\sum_{j=1}^J\bigg(\frac {j-1}{J}\bigg)^{\ell}+\frac{1}{J}.
$$

\section{Proof of Theorem \ref{th511}}
We start with the following lemma, which helps approximate the derivatives of the Riemann zeta function by the Dirichlet polynomials.
\begin{lemma}\label{lm521}
For $T\to\infty$, $T\le t\le 2T$ and $\ell\le(\log T)/(\log_2 T)$, we have that
$$
(-1)^{\ell}\zeta^{(\ell)}(1+\i t)
= \sum_{n\le T}\frac{(\log n)^{\ell}}{n^{1+\i t}} + O\big((\log_2 T)^{\ell}\big),
$$
where the implied constant is absolute.
\end{lemma}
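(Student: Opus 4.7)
The plan is to start from the absolutely convergent representation
$$(-1)^\ell\zeta^{(\ell)}(s) - \sum_{n\le T}\frac{(\log n)^\ell}{n^s} = \sum_{n>T}\frac{(\log n)^\ell}{n^s}\qquad(\re s>1),$$
bound the tail uniformly in $s$ on the half-plane $\re s>1$ near $1+\i t$, and then pass to the limit $s\to 1+\i t$ by continuity, exploiting that both $(-1)^\ell\zeta^{(\ell)}(s)$ and the finite Dirichlet polynomial $\sum_{n\le T}(\log n)^\ell n^{-s}$ are continuous at $s=1+\i t$.

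To control the tail I would use Abel summation with smooth factor $b(u)=(\log u)^\ell$ and cumulant $S_s(u):=\sum_{n\le u}n^{-s}$. Writing $S_s(u)=\zeta(s)+r_s(u)$ and using the elementary identity $\int_T^M \ell(\log u)^{\ell-1}u^{-1}\,du=(\log M)^\ell-(\log T)^\ell$, the $\zeta(s)$ pieces cancel exactly and the tail collapses to
$$\sum_{T<n\le M}\frac{(\log n)^\ell}{n^s}=r_s(M)(\log M)^\ell-r_s(T)(\log T)^\ell-\int_T^M r_s(u)\frac{\ell(\log u)^{\ell-1}}{u}\,du.$$
The analytic input is Titchmarsh's classical approximate formula $\zeta(s)=\sum_{n\le u}n^{-s}-u^{1-s}/(1-s)+O(u^{-\sigma})$, valid for $\sigma\ge 1/2$ and $|t|\le 2\pi u$, which yields $|r_s(u)|\ll 1/u$ uniformly in $\re s$ near $1$ once $u\ge|t|/(2\pi)$ — a condition automatically met for $u\ge T$ and $t\in[T,2T]$. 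Letting $M\to\infty$ kills the $M$-boundary term, and the remaining two terms are both $\ll(\log T)^\ell/T$, with the integral controlled via $\int_T^\infty(\log u)^{\ell-1}u^{-2}\,du=\Gamma(\ell,\log T)\ll(\log T)^{\ell-1}/T$.

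Passing to the limit $s\to 1+\i t$ therefore yields
$$(-1)^\ell\zeta^{(\ell)}(1+\i t)-\sum_{n\le T}\frac{(\log n)^\ell}{n^{1+\i t}}\ll\frac{(\log T)^\ell}{T}.$$
The hypothesis $\ell\le(\log T)/(\log_2 T)$ enters only at this last step: it gives $\log((\log T)^\ell)=\ell\log_2T\le\log T$, hence $(\log T)^\ell\le T$ and the error is in fact $O(1)$, comfortably within the claimed $O((\log_2 T)^\ell)$. I expect the only non-routine point to be verifying that Titchmarsh's approximation is uniform in $\re s$ in a small neighbourhood of $1+\i t$, a standard consequence of Euler--Maclaurin summation; everything else is bookkeeping in the Abel identity together with the algebraic cancellation of the $\zeta(s)$-terms.
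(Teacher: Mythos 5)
There is a genuine gap at the crucial estimate $|r_s(u)|\ll 1/u$. Titchmarsh's formula gives $r_s(u)=S_s(u)-\zeta(s)=\frac{u^{1-s}}{1-s}+O(u^{-\sigma})$, and the modulus of the first term is $u^{1-\sigma}/|1-s|\asymp u^{1-\sigma}/T$; for $\sigma$ close to $1$ this is of size about $1/T$ essentially independently of $u$, so it is $\ll 1/u$ only for $u$ of order $T$, not throughout the range $[T,\infty)$ of your integral (take $u=T^{2}$ and $\sigma=1+1/\log u$: then $|r_s(u)|\asymp 1/T\gg 1/u$). With the correct bound, the absolute-value treatment of the integral term produces the piece
\begin{equation*}
\frac{\ell}{T}\int_T^{\infty}u^{-\sigma}(\log u)^{\ell-1}\,\d u,
\end{equation*}
which blows up as $\sigma\to 1^{+}$, so the bound uniform in $\re s>1$ that you need for the continuity/limit step is never established, and the asserted $O((\log T)^{\ell}/T)$ does not follow as written. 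The conclusion itself is not wrong — your error term is in fact attainable — but only by using the oscillation of $u^{-\i t}$, which taking absolute values destroys: writing the problematic piece as $\frac{\ell}{1-s}\int_T^{\infty}u^{-s}(\log u)^{\ell-1}\,\d u$ and integrating by parts repeatedly (each step gains a factor $\ll \ell/|1-s|\le \ell/T$, harmless since $\ell\le(\log T)/(\log_2T)$) shows this integral is $\ll (\log T)^{\ell-1}/T$, after which your bookkeeping goes through and even yields an error $O(1)$, stronger than the $O((\log_2T)^{\ell})$ in the statement.

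For comparison, the paper does not prove the lemma from scratch at all: it quotes Yang's Lemma 1 (itself built on Titchmarsh's Theorem 4.11, with the choice $\sigma=1$, $\varepsilon=(\log_2T)^{-1}$, which is where the $(\log_2T)^{\ell}$ error originates). So a self-contained argument along your lines is welcome and, once the oscillatory integral is handled correctly, would even sharpen the error term; but as it stands the proof hinges on a pointwise bound for $r_s(u)$ that is false for $u$ much larger than $T$.
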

\begin{proof}
This is \cite[Lemma 1]{YDD}, where we have taken $\sigma=1$ and $\varepsilon=(\log_2 T)^{-1}$ as Yang did. See also \cite[Theorem 4.11]{Ti86}.
\end{proof}

To employ the resonance method, we choose the same weight function $\phi(\cdot)$ as that used by Soundararajan \cite[page 471]{S08}.
Thus let $\phi(t)$ be a smooth function compactly supported in $[1,2]$,
such that $0\le \phi(t)\le 1$ always and $\phi(t)=1$ for $t\in(5/4,7/4)$.
Then the Fourier transform of $\phi$ satisfies
$\widehat{\phi}(u)\ll_\alpha{|u|^{-\alpha}}$ for any integer $\alpha\ge1$.

For sufficiently large $T$, we set
$$x=\frac{\log T}{3\log_2T}\qquad\text{and}\qquad b=\lfloor\log_2T\rfloor.$$
Furthermore, we take $\mathcal P$ and $\mathcal M$ as \eqref{PM}.
Note that ${\mathcal P}\le\sqrt T$ by the prime number theorem.
Then we define the resonator
$$R(t):=\sum_{m\in\M}m^{\i t}.$$
Denote
$$M_1(R,T):=\int_{\mathbb R}|R(t)|^2\phi\bigg(\frac tT\bigg)\d t,$$
$$M_2(R,T):=\int_{\mathbb R}(-1)^{\ell}\zeta^{(\ell)}(1+\i t)|R(t)|^2\phi\bigg(\frac tT\bigg)\d t.$$
Since $	{\rm supp}(\phi)\subset[1,2]$, we have that
\begin{align}
Z^{(\ell)}(T)\ge\frac{|M_2(R,T)|}{M_1(R,T)}.\label{z^(l)}
\end{align}

For $M_1(R,T)$, we have
\begin{align*}
	M_1(R,T)=\sum_{m,n\in{\mathcal M}}\int_{\mathbb R}\bigg(\frac mn\bigg)^{\i t}\phi\bigg(\frac tT\bigg)\d t
	=T\sum_{m,n\in{\mathcal M}}\widehat\phi(T\log(n/m)).
\end{align*}
When $m\neq n$, the choice of ${\mathcal P}$ guarantees that $|\log(n/m)|\gg 1/\sqrt T$,
and consequently
\begin{align}
	\widehat\phi(T\log(n/m))\ll\frac{1}{T^2}.\label{qe531}
\end{align}
Thus the off-diagonal terms contributes
\begin{align*}
	T\sum_{\substack{m,n\in{\mathcal M}\\ m\neq n}} \widehat\phi(T\log(n/m))\ll\frac{1}{T}|\M|^2.
\end{align*}
Therefore, we derive that
\begin{align}
	M_1(R,T)=T\widehat\phi(0)|\M|+O\bigg(\frac{1}{T}|\M|^2\bigg).\label{qe532}
\end{align}

For $M_2(R,T)$, by Lemma \ref{lm521} we have
\begin{align}
	M_2(R,T)
	& = \int_{\mathbb R}\bigg(\sum_{k\le T}\frac{(\log k)^{\ell}}{k^{1+\i t}}\bigg)|R(t)|^2\phi\bigg(\frac tT\bigg)\d t+O\left((\log_2 T)^{\ell}M_1(R,T)\right)\nonumber\\
	& = T\sum_{k\le T}\frac{(\log k)^{\ell}}{k}\sum_{m,n\in{\mathcal M}}\widehat\phi(T\log(kn/m))+O\left((\log_2 T)^{\ell}M_1(R,T)\right).\label{qe533}
\end{align}
Similar to \eqref{qe531}, for $kn\neq m$ we have
\begin{align*}
	\widehat\phi(T\log(kn/m))\ll\frac{1}{T^2}.
\end{align*}
Consequently, we obtain that
\begin{align*}
	\sum_{k\le T}\frac{(\log k)^{\ell}}{k}\sum_{\substack{m,n\in{\mathcal M}\\ kn\neq m}} \widehat\phi(T\log(kn/m))
\ll\frac{(\log T)^{\ell+1}}{T^2}|\M|^2.
\end{align*}
Inserting this into \eqref{qe533}, we have
\begin{align*}
	M_2(R,T)=T\widehat\phi(0)\sum_{m\in\M}\sum_{k\mid m}\frac{(\log k)^{\ell}}{k}
+O\bigg(\frac{(\log T)^{\ell+1}}{T}|\M|^2\bigg)+O\left((\log_2 T)^{\ell}M_1(R,T)\right).
\end{align*}
Combining this with \eqref{z^(l)} and \eqref{qe532}, we derive that
\begin{align}
	Z^{(\ell)}(T)\ge\frac{1}{|\M|}\sum_{m\in\M}\sum_{k\mid m}\frac{(\log k)^{\ell}}{k}+O\left((\log_2 T)^{\ell}\right).\label{rateM2M1}
\end{align}
Then the theorem follows by taking $J=\lfloor\tfrac12\log_3T\rfloor$ in Proposition \ref{prop531}.
	\hfill
	$\square$

	\vskip 5mm
	
	\textbf{Acknowledgement}.
	The authors would like to thank professor Jie Wu for his suggestion on exploring this subject and Daodao Yang for some useful discussion.  Zikang Dong is supported by the China Scholarship Council (CSC) for his study in France.
	
	\bigskip

\end{document}